\theoremstyle{definition}
\newtheorem{thm}{Theorem}[section]
\newtheorem{defn}[thm]{Definition}
\newtheorem{cor}[thm]{Corollary}
\newtheorem{rmk}[thm]{Remark}
\newtheorem{ex}[thm]{Example}
\theoremstyle{definition}
\begin{document}
	
	\title{A tiling proof of Euler's Pentagonal Number Theorem and generalizations}
	
	\author{Dennis Eichhorn}
	
	\author{Hayan Nam}
	
	\author{Jaebum Sohn}
	
	
	\thanks{The third named author's work was supported by the National Research Foundation of Korea (NRF) NRF-2017R1A2B4009501.}

\begin{abstract}
	In two papers, Little and Sellers introduced an exciting new combinatorial method for proving partition identities which is not directly bijective.
	Instead, they consider various sets of weighted tilings of a $1 \times \infty$ board with squares and dominoes, and for each type of tiling they construct a generating function in two different ways, which generates a $q$-series identity.
	Using this method, they recover quite a few classical $q$-series identities, but Euler's Pentagonal Number Theorem is not among them.
	In this paper, we introduce a key parameter when constructing the generating functions of various sets of tilings which allows us to recover Euler's Pentagonal Number Theorem along with an infinite family of generalizations.
	\end{abstract}	
	
	\maketitle
		
\section{Introduction}
	
	Proofs of $q$-series identities generally fall into one of three categories:  proofs by classical by $q$-series manipulations, proofs appealing to modular forms, or proofs using bijective methods showing that two sets of combinatorial objects generated by the two sides of an identity are equinumerous. 
	In \cite{LS1} and \cite{LS2},  Little and Sellers introduced a new combinatorial method for proving partition identities which is not directly bijective.
Instead, they consider various sets of weighted tilings of a $1 \times \infty$ board with squares and dominoes, and for each type of tiling they construct a generating function for the set of all tilings in two different ways.
Since this creates two different expressions for the same object, it generates a $q$-series identity.
Using this method, they recover quite a few classical $q$-series identities, 
ranging from identities like
$$
\sum_{n \geq 0} \frac{(-z;q)_n}{(q^2;q^2)_n}q^{n^2+n} 
= \prod_{n\geq 1} (1+q^{4n-2})(1+zq^{4n-2})(1+q^{4n})
$$
due to G\"ollnitz
to more complicated identities such as
$$
\sum_{n = 0}^{\infty} \frac{z^n q^{n^2+n} (1-z^2q^{2n+3})}{(q;q)_n} 
= (-zq;q)_{\infty} \sum_{n = 0}^{\infty} 
    \frac{(-1)^n z^{2n} q^{3n^2}}{(q^2;q^2)_n (-zq;q)_{2n+1}},
$$
where we are using the usual $q$-series notation that defines 
$$(a;q)_{n} = \prod_{i=0}^{n-1} (1-aq^i) \qquad \text{and} \qquad  
(a;q)_{\infty} = \lim_{n \to \infty} (a;q)_{n}.
$$
Interestingly, 
Euler's Pentagonal Number Theorem is not among the results they recover using their method.

In this paper, we modify their method of generating $q$-series identities by introducing a new statistic on tilings, the rank.
By considering the rank of each tiling, we construct our generating functions in a new way
which allows us to recover Euler's Pentagonal Number Theorem along with a
one-parameter generalization.
Fortunately, the notion of the rank of a tiling lends itself to a two-parameter generalization, the $(k,\ell)$-rank.  
This leads directly to a much more general identity which is essentially a three-parameter generalization of Euler's Pentagonal Number Theorem.


\section{Tilings and the rank}\label{tileandrank}

As in \cite{LS1} and \cite{LS2}, we consider tiling a $1 \times \infty$ board 
with tiles of various colors.
For the bulk of the paper, we will only consider tilings with
white squares and black squares where each tiling has a finite number of black squares.
We define the \emph{position} of a square to be its location on the board, a positive integer, and every tiling has exactly one square at each position.
We define the \emph{weight} of a tile $t$ to be 
$$
w(t) =  \begin{cases} zq^i & \text{if $t$ is a black square in position $i$,} \\
1 & \text{if $t$ is a white square in position $i$,} \end{cases}
$$
and the weight of a tiling $T$ is
$$
w(T) = \prod_{t \in T} w(t),
$$
the product of the weights of its tiles.
Notice if a tiling $T$ has 
$m$ black tiles and 
$n$ is the sum of the positions of the black tiles in the tiling,
then the weight of the tiling is $w(T) = z^m q^n$.

\begin{ex} \label{example1}
	Consider the tiling with black squares at positions
	$3,4,6,7,8,11,12,13,14,15,16$, and $18$.
	Since the white squares at the other positions have weight $1$, they do not contribute to the weight of the tiling. 
	Thus the weight of the tiling is the product of the weights of the black squares, which is 
	$$
	 zq^{3} \times zq^{4} \times zq^{6} \times zq^{7} \times zq^{8} \times zq^{11} \times zq^{12} \times zq^{13} \times zq^{14} \times zq^{15} \times zq^{16} \times zq^{18}  = z^{12}q^{127}. 
	$$
\end{ex}

\begin{figure}[h]
	
	\centering
	
	\begin{tikzpicture}
	
	\draw [thick] (10,0) -- (0,0) -- (0,0.5) -- (10,0.5);
	
	\node at (0.25,-0.25) {1};
	
	\node at (0.75,-0.25) {2};
	
	\node at (1.25,-0.25) {3};
	
	\node at (1.75,-0.25) {4};
	
	\node at (2.25,-0.25) {5};
	
	\node at (2.75,-0.25) {6};
	
	\node at (3.25,-0.25) {7};
	
	\node at (3.75,-0.25) {8};
	
	\node at (4.25,-0.25) {9};
	
	\node at (4.72,-0.25) {10};
	
	\node at (5.22,-0.25) {11};
	
	\node at (5.72,-0.25) {12};
	
	\node at (6.22,-0.25) {13};
	
	\node at (6.72,-0.25) {14};
	
	\node at (7.22,-0.25) {15};
	
	\node at (7.72,-0.25) {16};
	
	\node at (8.22,-0.25) {17};
	
	\node at (8.72,-0.25) {18};
	
	\node at (9.22,-0.25) {19};
	
	\node at (9.76,-0.25) {$\cdots$};
	
	\draw [thick] (0.5,0) -- (0.5,0.5);
	
	\draw [thick] (1,0) -- (1,0.5);
	
	\draw [fill=black] (1,0) rectangle (1.5,0.5);
	
	\draw [thick, white] (1.5,0) -- (1.5,0.5);
	
	\draw [fill=black] (1.5,0) rectangle (2,0.5);
	
	\draw [thick] (2,0) -- (2,0.5);
	
	\draw [fill=black] (2.5,0) rectangle (3,0.5);
	
	\draw [thick] (2.5,0) -- (2.5,0.5);
	
	\draw [fill=black] (3,0) rectangle (3.5,0.5);
	
	\draw [thin, white] (3,0) -- (3,0.5);
	
	\draw [fill=black] (3.5,0) rectangle (4,0.5);
	
	\draw [thin, white] (3.5,0) -- (3.5,0.5);
	
	\draw [thick] (4,0) -- (4,0.5);
	
	\draw [thick] (4.5,0) -- (4.5,0.5);
	
	\draw [thick] (5,0) -- (5,0.5);
	
	\draw [fill=black] (5,0) rectangle (5.5,0.5);
	
	\draw [thick, white] (5.5,0) -- (5.5,0.5);
	
	\draw [fill=black] (5.5,0) rectangle (6,0.5);
	
	\draw [thick, white] (6,0) -- (6,0.5);
	
	\draw [fill=black] (6,0) rectangle (6.5,0.5);
	
	\draw [thick, white] (6.5,0) -- (6.5,0.5);
	
	\draw [fill=black] (6.5,0) rectangle (7,0.5);
	
	\draw [thick, white] (7,0) -- (7,0.5);
	
	\draw [fill=black] (7,0) rectangle (7.5,0.5);
	
	\draw [thick, white] (7.5,0) -- (7.5,0.5);
	
	\draw [fill=black] (7.5,0) rectangle (8,0.5);
	
	\draw [thick] (8,0) -- (8,0.5);
	
	\draw [thick] (8.5,0) -- (8.5,0.5);
	
	\draw [fill=black] (8.5,0) rectangle (9,0.5);
	
	\draw [thick] (9,0) -- (9,0.5);
	
	\draw [thick] (9.5,0) -- (9.5,0.5);
	
	\end{tikzpicture}
	
	\caption{A tiling with black squares at positions $3,4,6,7,8,11,12,13,14,15,16$, and $18$.}
	
\end{figure}
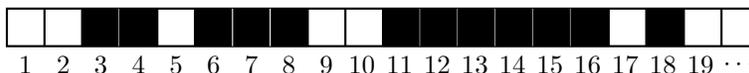

Define 
$$
F(z, q) = \sum_{\text{tilings} \ T} w(T).
$$
We call $F(z, q)$ the \emph{generating function} of all tilings of the $1 \times \infty$ board. 
Notice that the contribution to the total weight of the tiling of the tile at position $i$ must be either $1$ or $zq^i$, and thus
\begin{equation}\label{simpleF}
F(z, q) = \prod_{i=1}^{\infty} (1+zq^i) = (-zq;q)_{\infty}.
\end{equation}

Since tilings are determined completely by the positions of their black tiles,
we will often only discuss the positions of black tiles.
Some of our treatment of tilings will involve changing the positions of various tiles;
it is to be assumed that whenever we move black tiles,
any displaced white tiles are simply removed, and any vacancies left by moving 
black tiles are filled in with white tiles as needed to complete the tiling.
Little and Sellers \cite{LS2} defined one particular moving process which they called the \emph{projection} of tiles, which we adopt here.
Define projection to be moving some number of black tiles to the right in a way that does not change the relative positions of the black tiles.
Observe that projecting one black tile $k$ positions to the right increases the weight of a tiling by a factor of $q^k$.

We are now ready to recompute $F(z, q)$ in a new way.
Notice that every tiling with $m$ black tiles could be constructed by first
placing black tiles in positions $1$ through $m$ (with white tiles at all other positions)
and then projecting those $m$ black tiles out to their proper positions.
The weight of the initial placement is $z^m q^{m(m+1)/2}$, and 
there are several ways in which we could group tiles as we project them from positions $1$ through $m$ out to any $m$ given positions $p_1 < p_2 < \dots < p_m$.
One method is to first project all $m$ tiles to the right until the leftmost tile is in position $p_1$,
then project the $m-1$ remaining tiles to the right until the second leftmost tile is in position $p_2$, 
and then iteratively continuing to project the remaining tiles to the right until 
the tiles are in positions $p_1$ through $p_m$. 
Projecting $m$ tiles $k$ positions to the right will increase the weight by a factor of $q^{mk}$.
To generate any possible tiling, we must allow for projections every possible nonnegative distance, and so generating the first projection of all $m$ tiles introduces a factor of 
$(1+q^m+q^{2m}+q^{3m}+ \cdots ) = 1/(1-q^m)$
to the generating function.
Similarly, the projection of the next $m-1$ tiles 
introduces a factor of 
$(1+q^{m-1}+q^{2(m-1)}+q^{3(m-1)}+ \cdots ) = 1/(1-q^{m-1})$, and in general,
the projection of the next $m-i$ tiles 
introduces a factor of 
$(1+q^{m-i}+q^{2(m-i)}+q^{3(m-i)}+ \cdots ) = 1/(1-q^{m-i})$.
Summing over every possible number of black tiles $m$,
we have 
\begin{equation} \label{projectedF}
F(z, q) = \sum_{m=0}^{\infty} \frac{z^m q^{m(m+1)/2}}{(q;q)_{m}}.
\end{equation}

Equating our two expressions for $F(z, q)$ in (\ref{simpleF}) and (\ref{projectedF}),
we recover an identity of Euler \cite{Euler}, namely 
\begin{equation}
(-zq;q)_{\infty} = \sum_{m=0}^{\infty} \frac{z^m q^{m(m+1)/2}}{(q;q)_{m}}.
\end{equation}

This is perhaps the simplest example of a $q$-series identity that can be proven using tilings and projection. 
Little and Sellers actually start with more sophisticated identities by treating 
tilings with squares and dominoes.


In the next subsection, we introduce the rank of a tiling, which is the critical
statistic that will allow us to use this same method to recover Euler's Pentagonal Number Theorem
and a one-parameter generalization due to Sylvester.

\subsection{The rank}

Indexing tilings by their ranks instead of their total number of black squares gives us 
a new way to prove $q$-series identities.
For a given tiling, let $b(m)$ be the number of black squares at positions greater than $m$. 

\begin{defn} \label{def:rank}
The \emph{rank} of a tiling is the least $m$ such that $b(m) \leq m$.
\end{defn}

\begin{ex} \label{example2}
	Consider the tiling with black squares at positions
	$3,4,6,7,8,11,12,13,14,15,16$, and $18$.
	For each position $m$, we compute $b(m)$ in Table \ref{btable}.
	Since $b(7) = 8$ and $b(8) = 7$, we see that the rank of this tiling is $8$.
\end{ex}

\begin{table}[h] 
	\begin{center}
		\begin{tabular}{ l|*{20}{c}r}
			\hline
			$m$ & 1 & 2 & 3 & 4 & 5 & 6 & 7 & 8 & 9 & 10 & 11 & 12 & 13 & 14 & 15 & 16 & 17 & 18 & 19 \\  \hline
			$b(m)$ & 12 & 12 & 11 & 10 & 10 & 9 & 8 & 7 & 7 & 7 & 6 & 5 & 4 & 3 & 2 & 1 & 1 & 0 & 0 \\ \hline
		\end{tabular}
	\end{center}
	\caption{$b(m)$ is the number of black squares at positions greater than $m$.}
	\label{btable}
\end{table}

Notice that changing the colors of any of the tiles in the first seven position of the tiling above does not change the rank.
The tiling with black squares at 
$1,2,3,4,5,6,7,8,11,12,13,14,15,16$, and $18$ also has rank $8$,
as does the tiling with black squares at just
$8,11,12,13,14,15,16$, and $18$, for example (see Figure \ref{rank8exs}).

\begin{figure}[h]
	\centering
	\begin{tikzpicture}
	\draw [thick] (10,0) -- (0,0) -- (0,0.5) -- (10,0.5);
	\node at (0.25,-0.25) {1};
	\node at (0.75,-0.25) {2};
	\node at (1.25,-0.25) {3};
	\node at (1.75,-0.25) {4};
	\node at (2.25,-0.25) {5};
	\node at (2.75,-0.25) {6};
	\node at (3.25,-0.25) {7};
	\node at (3.75,-0.25) {8};
	\node at (4.25,-0.25) {9};
	\node at (4.72,-0.25) {10};
	\node at (5.22,-0.25) {11};
	\node at (5.72,-0.25) {12};
	\node at (6.22,-0.25) {13};
	\node at (6.72,-0.25) {14};
	\node at (7.22,-0.25) {15};
	\node at (7.72,-0.25) {16};
	\node at (8.22,-0.25) {17};
	\node at (8.72,-0.25) {18};
	\node at (9.22,-0.25) {19};
	\node at (9.76,-0.25) {$\cdots$};
	\draw [thick] (0.5,0) -- (0.5,0.5);
	\draw [thick] (1,0) -- (1,0.5);
	\draw [fill=black] (1,0) rectangle (1.5,0.5);
	\draw [thick, white] (1.5,0) -- (1.5,0.5);
	\draw [fill=black] (1.5,0) rectangle (2,0.5);
	\draw [thick] (2,0) -- (2,0.5);
	\draw [fill=black] (2.5,0) rectangle (3,0.5);
	\draw [thick] (2.5,0) -- (2.5,0.5);
	\draw [fill=black] (3,0) rectangle (3.5,0.5);
	\draw [thin, white] (3,0) -- (3,0.5);
	\draw [fill=black] (3.5,0) rectangle (4,0.5);
	\draw [thin, white] (3.5,0) -- (3.5,0.5);
	\draw [thick] (4,0) -- (4,0.5);
	\draw [thick] (4.5,0) -- (4.5,0.5);
	\draw [thick] (5,0) -- (5,0.5);
	\draw [fill=black] (5,0) rectangle (5.5,0.5);
	\draw [thick, white] (5.5,0) -- (5.5,0.5);
	\draw [fill=black] (5.5,0) rectangle (6,0.5);
	\draw [thick, white] (6,0) -- (6,0.5);
	\draw [fill=black] (6,0) rectangle (6.5,0.5);
	\draw [thick, white] (6.5,0) -- (6.5,0.5);
	\draw [fill=black] (6.5,0) rectangle (7,0.5);
	\draw [thick, white] (7,0) -- (7,0.5);
	\draw [fill=black] (7,0) rectangle (7.5,0.5);
	\draw [thick, white] (7.5,0) -- (7.5,0.5);
	\draw [fill=black] (7.5,0) rectangle (8,0.5);
	\draw [thick] (8,0) -- (8,0.5);
	\draw [thick] (8.5,0) -- (8.5,0.5);
	\draw [fill=black] (8.5,0) rectangle (9,0.5);
	\draw [thick] (9,0) -- (9,0.5);
	\draw [thick] (9.5,0) -- (9.5,0.5);
	\end{tikzpicture}

	\begin{tikzpicture}
	\draw [thick] (10,0) -- (0,0) -- (0,0.5) -- (10,0.5);
	\node at (0.25,-0.25) {1};
	\node at (0.75,-0.25) {2};
	\node at (1.25,-0.25) {3};
	\node at (1.75,-0.25) {4};
	\node at (2.25,-0.25) {5};
	\node at (2.75,-0.25) {6};
	\node at (3.25,-0.25) {7};
	\node at (3.75,-0.25) {8};
	\node at (4.25,-0.25) {9};
	\node at (4.72,-0.25) {10};
	\node at (5.22,-0.25) {11};
	\node at (5.72,-0.25) {12};
	\node at (6.22,-0.25) {13};
	\node at (6.72,-0.25) {14};
	\node at (7.22,-0.25) {15};
	\node at (7.72,-0.25) {16};
	\node at (8.22,-0.25) {17};
	\node at (8.72,-0.25) {18};
	\node at (9.22,-0.25) {19};
	\node at (9.76,-0.25) {$\cdots$};
	\draw [thick] (0.5,0) -- (0.5,0.5);
	\draw [thick] (1,0) -- (1,0.5);
	\draw [thick] (1.5,0) -- (1.5,0.5);
	\draw [thick] (2,0) -- (2,0.5);
	\draw [thick] (2.5,0) -- (2.5,0.5);
	\draw [thick] (3,0) -- (3,0.5);
	\draw [fill=black] (3.5,0) rectangle (4,0.5);
	\draw [thin, white] (3.5,0) -- (3.5,0.5);
	\draw [thick] (4,0) -- (4,0.5);
	\draw [thick] (4.5,0) -- (4.5,0.5);
	\draw [thick] (5,0) -- (5,0.5);
	\draw [fill=black] (5,0) rectangle (5.5,0.5);
	\draw [thick, white] (5.5,0) -- (5.5,0.5);
	\draw [fill=black] (5.5,0) rectangle (6,0.5);
	\draw [thick, white] (6,0) -- (6,0.5);
	\draw [fill=black] (6,0) rectangle (6.5,0.5);
	\draw [thick, white] (6.5,0) -- (6.5,0.5);
	\draw [fill=black] (6.5,0) rectangle (7,0.5);
	\draw [thick, white] (7,0) -- (7,0.5);
	\draw [fill=black] (7,0) rectangle (7.5,0.5);
	\draw [thick, white] (7.5,0) -- (7.5,0.5);
	\draw [fill=black] (7.5,0) rectangle (8,0.5);
	\draw [thick] (8,0) -- (8,0.5);
	\draw [thick] (8.5,0) -- (8.5,0.5);
	\draw [fill=black] (8.5,0) rectangle (9,0.5);
	\draw [thick] (9,0) -- (9,0.5);
	\draw [thick] (9.5,0) -- (9.5,0.5);
	\end{tikzpicture}

	\begin{tikzpicture}
	\draw [thick] (10,0) -- (0,0) -- (0,0.5) -- (10,0.5);
	\node at (0.25,-0.25) {1};
	\node at (0.75,-0.25) {2};
	\node at (1.25,-0.25) {3};
	\node at (1.75,-0.25) {4};
	\node at (2.25,-0.25) {5};
	\node at (2.75,-0.25) {6};
	\node at (3.25,-0.25) {7};
	\node at (3.75,-0.25) {8};
	\node at (4.25,-0.25) {9};
	\node at (4.72,-0.25) {10};
	\node at (5.22,-0.25) {11};
	\node at (5.72,-0.25) {12};
	\node at (6.22,-0.25) {13};
	\node at (6.72,-0.25) {14};
	\node at (7.22,-0.25) {15};
	\node at (7.72,-0.25) {16};
	\node at (8.22,-0.25) {17};
	\node at (8.72,-0.25) {18};
	\node at (9.22,-0.25) {19};
	\node at (9.76,-0.25) {$\cdots$};
	\draw [fill=black] (0,0) rectangle (0.5,0.5);
	\draw [thick, white] (0.5,0) -- (0.5,0.5);
	\draw [fill=black] (0.5,0) rectangle (1,0.5);
	\draw [ultra thick, white] (1,0) -- (1,0.5);
	\draw [fill=black] (1,0) rectangle (1.5,0.5);
	\draw [thick, white] (1.5,0) -- (1.5,0.5);
	\draw [fill=black] (1.5,0) rectangle (2,0.5);
	\draw [thick, white] (2,0) -- (2,0.5);
	\draw [fill=black] (2,0) rectangle (2.5,0.5);
	\draw [fill=black] (2.5,0) rectangle (3,0.5);
	\draw [thin, white] (2.5,0) -- (2.5,0.5);
	\draw [fill=black] (3,0) rectangle (3.5,0.5);
	\draw [thin, white] (3,0) -- (3,0.5);
	\draw [fill=black] (3.5,0) rectangle (4,0.5);
	\draw [thin, white] (3.5,0) -- (3.5,0.5);
	\draw [thick] (4,0) -- (4,0.5);
	\draw [thick] (4.5,0) -- (4.5,0.5);
	\draw [thick] (5,0) -- (5,0.5);
	\draw [fill=black] (5,0) rectangle (5.5,0.5);
	\draw [thick, white] (5.5,0) -- (5.5,0.5);
	\draw [fill=black] (5.5,0) rectangle (6,0.5);
	\draw [thick, white] (6,0) -- (6,0.5);
	\draw [fill=black] (6,0) rectangle (6.5,0.5);
	\draw [thick, white] (6.5,0) -- (6.5,0.5);
	\draw [fill=black] (6.5,0) rectangle (7,0.5);
	\draw [thick, white] (7,0) -- (7,0.5);
	\draw [fill=black] (7,0) rectangle (7.5,0.5);
	\draw [thick, white] (7.5,0) -- (7.5,0.5);
	\draw [fill=black] (7.5,0) rectangle (8,0.5);
	\draw [thick] (8,0) -- (8,0.5);
	\draw [thick] (8.5,0) -- (8.5,0.5);
	\draw [fill=black] (8.5,0) rectangle (9,0.5);
	\draw [thick] (9,0) -- (9,0.5);
	\draw [thick] (9.5,0) -- (9.5,0.5);
	\end{tikzpicture}
	\caption{Several tilings with rank 8.}
	\label{rank8exs}
\end{figure}
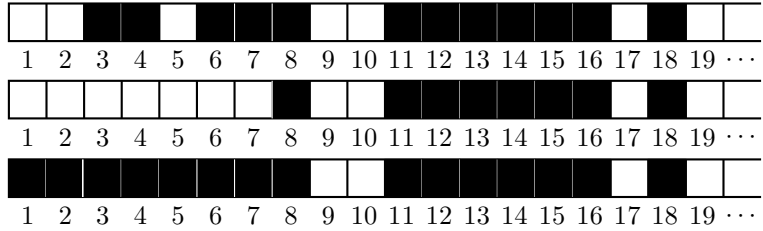

\begin{rmk} \label{rankcases}
	Notice that $b(m)$ is non-increasing, and that $b(m-1)-b(m) \leq 1$.
That being the case, we see that the rank of a tiling will be
the unique $m$ such that either
\begin{enumerate}
	\item $b(m) = m$, or
	\item $b(m)=m-1$ and $b(m-1)=m$,
\end{enumerate}
	and these two cases are mutually exclusive.
This will allow us to take a sum over all tilings by summing these two cases over every $m$ in our results below.
\end{rmk}

Conceptually, the reader may find it helpful to think of the graph $b(m)$ as a path along lattice points.
Remark \ref{rankcases} reflects the fact that 
every step of the lattice path is either directly to the right or at a slope of $-1$, and
as this lattice path goes from the vertical axis down to the horizontal axis, there are two possible cases.
Either (1), it will pass through a lattice point where 
$b(m) = m$, or (2), it will pass through a pair of lattice points such that
$b(m-1)=m$ and $b(m)=m-1$.
In Figure \ref{fig:M1} below,
we have indicated this by graphing the points $(m,m)$ in gray along with the line segments connecting 
$(m-1,m)$ and $(m,m-1)$ for each $m$.
For every tiling, the corresponding graph of $b(m)$ will either intersect one of the points,
or it will traverse one of the line segments (but not both).


\begin{figure}[H]
	\centering
	\begin{tikzpicture}
	\draw  [<->, thick](0,7) -- (0,0) -- (10,0);
	\node at (11,0) {position};
	\node at (0,7.5) {$b(m)$};
	\node at (-0.25,-0.25) {0};
	\node at (0,0.5) {---};
	\node at (0,1) {---};
	\node at (0,1.5) {---};
	\node at (0,2) {---};
	\node at (0,2.5) {---};
	\node at (-0.3,2.5) {5};
	\node at (0,3) {---};
	\node at (0,3.5) {---};
	\node at (0,4) {---};
	\node at (0,4.5) {---};
	\node at (0,5) {---};
	\node at (-0.35,5) {10};
	\node at (0,5.5) {---};
	\node at (0,6) {---};
	\node at (0,6.5) {---};
	\node at (0.5,0) {$\mid$};
	\node at (1,0) {$\mid$};
	\node at (1.5,0) {$\mid$};
	\node at (2,0) {$\mid$};
	\node at (2.5,0) {$\mid$};
	\node at (2.5,-0.3) {5};
	\node at (3,0) {$\mid$};
	\node at (3.5,0) {$\mid$};
	\node at (4,0) {$\mid$};
	\node at (4.5,0) {$\mid$};
	\node at (5,0) {$\mid$};
	\node at (5,-0.3) {10};
	\node at (5.5,0) {$\mid$};
	\node at (6,0) {$\mid$};
	\node at (6.5,0) {$\mid$};
	\node at (7,0) {$\mid$};
	\node at (7.5,0) {$\mid$};
	\node at (7.5,-0.3) {15};
	\node at (8,0) {$\mid$};
	\node at (8.5,0) {$\mid$};
	\node at (9,0) {$\mid$};
	\node at (9.5,0) {$\mid$};

	\draw [fill] (-0.07,5.93) rectangle (0.07,6.07);
	\draw [fill] (0.43,5.93) rectangle (0.57,6.07);
	\draw [fill] (0.93,5.93) rectangle (1.07,6.07);
	\draw [fill] (1.43,5.43) rectangle (1.57,5.57);
	\draw [fill] (1.93,4.93) rectangle (2.07,5.07);
	\draw [fill] (2.43,4.93) rectangle (2.57,5.07);
	\draw [fill] (2.93,4.43) rectangle (3.07,4.57);
	\draw [fill] (3.43,3.93) rectangle (3.57,4.07);
	\draw [fill] (3.93,3.43) rectangle (4.07,3.57);
	\draw [fill] (4.43,3.43) rectangle (4.57,3.57);	
	\draw [fill] (4.93,3.43) rectangle (5.07,3.57);
	\draw [fill] (5.43,2.93) rectangle (5.57,3.07);
	\draw [fill] (5.93,2.43) rectangle (6.07,2.57);
	\draw [fill] (6.43,1.93) rectangle (6.57,2.07);
	\draw [fill] (6.93,1.43) rectangle (7.07,1.57);
	\draw [fill] (7.43,0.93) rectangle (7.57,1.07);
	\draw [fill] (7.93,0.43) rectangle (8.07,0.57);
	\draw [fill] (8.43,0.43) rectangle (8.57,0.57);
	\draw [fill] (8.93,-0.07) rectangle (9.07,0.07);
	\draw [fill] (9.43,-0.07) rectangle (9.57,0.07);
	
	\filldraw[thick, gray] (0,0) circle [radius=0.07];
	\draw (0,0.5) circle [radius=0.07];
	\draw (0.5,0) circle [radius=0.07];
	\draw[thick] (0,0.5) -- (0.5,0);
	\filldraw[thick, gray] (0.5,0.5) circle [radius=0.07];
	\draw (1,0.5) circle [radius=0.07];
	\draw (0.5,1) circle [radius=0.07];
	\draw[thick] (0.5,1) -- (1,0.5);
	\filldraw[thick, gray] (1,1) circle [radius=0.07];
	\draw (1,1.5) circle [radius=0.07];
	\draw (1.5,1) circle [radius=0.07];
	\draw[thick] (1,1.5) -- (1.5,1);
	\filldraw[thick, gray] (1.5,1.5) circle [radius=0.07];
	\draw (2,1.5) circle [radius=0.07];
	\draw (1.5,2) circle [radius=0.07];
	\draw[thick] (1.5,2) -- (2,1.5);
	\filldraw[thick, gray] (2,2) circle [radius=0.07];
	\draw (2,2.5) circle [radius=0.07];
	\draw (2.5,2) circle [radius=0.07];
	\draw[thick] (2,2.5) -- (2.5,2);
	\filldraw[thick, gray] (2.5,2.5) circle [radius=0.07];
	\draw (3,2.5) circle [radius=0.07];
	\draw (2.5,3) circle [radius=0.07];
	\draw[thick] (2.5,3) -- (3,2.5);
	\filldraw[thick, gray] (3,3) circle [radius=0.07];
	\draw (3,3.5) circle [radius=0.07];
	\draw (3.5,3) circle [radius=0.07];
	\draw[thick] (3,3.5) -- (3.5,3);
	\filldraw[thick, gray] (3.5,3.5) circle [radius=0.07];
	\draw (4,3.5) circle [radius=0.07];
	\draw (3.5,4) circle [radius=0.07];
	\draw[thick] (3.5,4) -- (4,3.5);
	\filldraw[thick, gray] (4,4) circle [radius=0.07];
	\draw (4,4.5) circle [radius=0.07];
	\draw (4.5,4) circle [radius=0.07];
	\draw[thick] (4,4.5) -- (4.5,4);
	\filldraw[thick, gray] (4.5,4.5) circle [radius=0.07];
	\draw (5,4.5) circle [radius=0.07];
	\draw (4.5,5) circle [radius=0.07];
	\draw[thick] (5,4.5) -- (4.5,5);
	\filldraw[thick, gray] (5,5) circle [radius=0.07];
	\end{tikzpicture}
	\caption{The gray dots are the points $(m,m)$ and the line segments connect pairs of points $(m-1,m)$ and $(m,m-1)$.  The black squares graph the sequence $b(m)$ for the tiling from Examples \ref{example1} and \ref{example2}.   Since $b(7)=8$ and $b(8)=7$, we know that rank of the tiling is 8.} \label{fig:M1}
\end{figure}
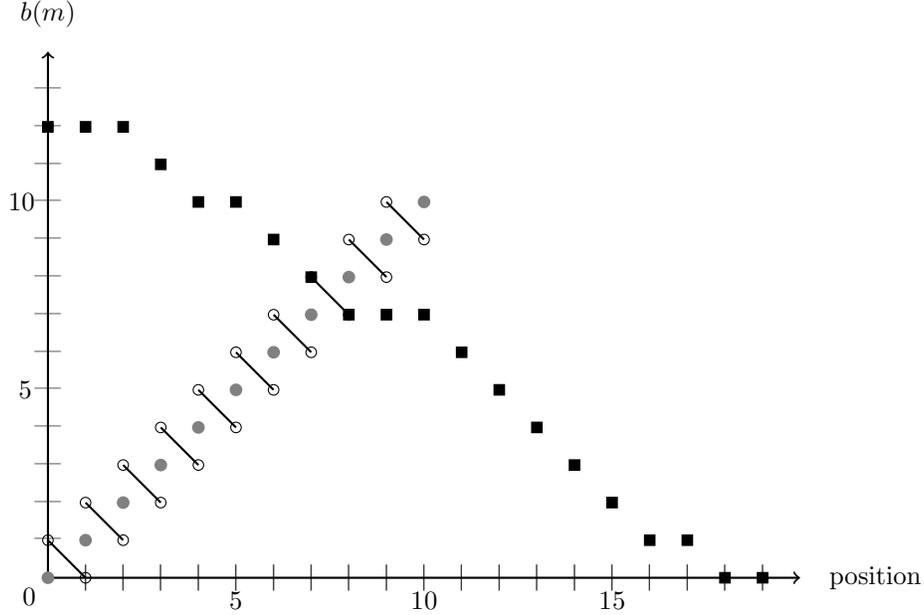


We are now ready to give a third expression for $F(z,q) = (-zq;q)_{\infty}$.

\begin{thm} 
	\label{GenEPNT}
	For $|q| < 1$,
	\begin{eqnarray}
	(-zq;q)_{\infty} &=& 1 + \sum_{m=1}^{\infty} \left (  \frac{(-zq;q)_{m-1}}{(q;q)_{m-1}}  z^{m} q^{m(3m-1)/2} + 
	\frac{(-zq;q)_{m}}{(q;q)_{m}}  z^{m} q^{m(3m+1)/2} \right ) \label{BestGenEPNT} \\
	&=& \sum_{m=0}^{\infty} 
	\frac{(-zq;q)_{m}}{(q;q)_{m}}  z^{m} q^{m(3m+1)/2}  (1+zq^{2m+1}). 
	\end{eqnarray}
\end{thm}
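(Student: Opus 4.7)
The plan is to compute $F(z,q)$ a third time by grouping tilings according to their rank, and then identify the result with the right-hand side of (\ref{BestGenEPNT}). The empty tiling has rank $0$ and weight $1$, accounting for the leading ``$1$''. For each $m \geq 1$, Remark \ref{rankcases} partitions the rank-$m$ tilings into two mutually exclusive families, and I will compute the generating function of each family separately.

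Consider first the family $b(m)=m$. This says that positions $1$ through $m$ can be colored arbitrarily (since $b$ is non-increasing and $b(m') \geq b(m)=m > m'$ for $m'<m$ forces these positions to have no effect on the rank), while positions $>m$ contain exactly $m$ black tiles. The free region contributes $\prod_{i=1}^{m}(1+zq^i) = (-zq;q)_m$. For the $m$ black tiles in positions $>m$, I will apply the projection argument used to derive (\ref{projectedF}), but starting from the minimal configuration with black tiles at positions $m+1,m+2,\ldots,2m$; this initial configuration has weight $z^m q^{(m+1)+(m+2)+\cdots+(2m)} = z^m q^{m(3m+1)/2}$, and the successive projections of $m, m-1, \ldots, 1$ tiles contribute $1/(q;q)_m$. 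The total contribution of this family is
$$
\frac{(-zq;q)_{m}}{(q;q)_{m}}\, z^{m} q^{m(3m+1)/2}.
$$

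For the second family, $b(m)=m-1$ and $b(m-1)=m$, the difference $b(m-1)-b(m)=1$ forces position $m$ to be black. Then positions $1$ through $m-1$ are unconstrained (same argument as before), contributing $(-zq;q)_{m-1}$, while positions $>m$ carry exactly $m-1$ black tiles, which I generate by projection from the minimal configuration at positions $m+1,\ldots,2m-1$. Combining the fixed black tile at position $m$ (weight $zq^m$), the minimal weight $z^{m-1}q^{(m+1)+\cdots+(2m-1)}$ of the projected tiles, and the projection factor $1/(q;q)_{m-1}$, this family contributes
$$
\frac{(-zq;q)_{m-1}}{(q;q)_{m-1}}\, z^{m} q^{m(3m-1)/2}.
$$
Summing both contributions over $m \geq 1$ and adding the $m=0$ term gives (\ref{BestGenEPNT}).

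For the second equality, I will re-index the case-(2) sum by $n=m-1$ and check the arithmetic identity $(n+1)(3n+2)/2 = n(3n+1)/2 + (2n+1)$, so that after re-indexing the case-(2) sum equals $\sum_{n=0}^{\infty}\frac{(-zq;q)_n}{(q;q)_n}z^n q^{n(3n+1)/2}\cdot zq^{2n+1}$. Combined with the case-(1) sum (absorbing the leading $1$ as the $m=0$ term), this yields the single closed form with factor $(1+zq^{2m+1})$.

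The main obstacle is justifying that the two families above really are complementary and exhaustive within rank $m$, and in particular checking that $b(m)=m$ automatically forces the rank to be $m$ (not smaller), so that we sum over each tiling exactly once. This follows from the monotonicity of $b$ together with the characterization in Remark \ref{rankcases}, but I will want to spell out that step carefully. Everything else reduces to the same projection mechanism already used for (\ref{projectedF}), now applied to a sub-board starting at position $m+1$.
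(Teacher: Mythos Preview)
Your proposal is correct and follows essentially the same approach as the paper: both group tilings by rank, invoke the two-case characterization in Remark~\ref{rankcases}, treat the first $m$ (respectively $m-1$) positions as free and generate the remaining black tiles by projection from the minimal configuration at $m+1,\ldots,2m$ (respectively $m+1,\ldots,2m-1$). Your derivation of the second equality by re-indexing $n=m-1$ is a clean addition that the paper leaves implicit.
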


\begin{proof}

We proceed by 
writing down the generating function for all tilings of rank $m$,
and then summing over every possible rank of a tiling.
We do this by first considering the possible weights of the first $m$ positions of a tiling of rank $m$, and then generating the tiles at positions greater than $m$ by placing them in the first available positions and projecting them.


For tilings of rank $m \geq 1$,
notice that the weight of the tile at each position $i$ 
for $1 \leq i \leq m-1$ 
must be either $1$ or $zq^i$, and thus
the first $m-1$ positions are generated by 
$(-zq;q)_{m-1}$. 
The remainder of the generating function comes from two cases depending on whether $b(m) = m$ or $m-1$.
 
If $b(m) = m-1$, as we have seen above we also have $b(m-1) = m$, 
and so position $m$ must have a black square.
Thus the contribution to the generating function of position $m$ is $zq^m$.
For positions greater than $m$, we can generate all possibilities by first placing 
$m-1$ black tiles at locations $m+1$ through $2m-1$, and then projecting them in all possible ways.
The initial placement of those $m-1$ tiles has weight $z^{m-1} q^{3m(m-1)/2}$, and all possible projections are generated by $1/{(q;q)_{m-1}}$.
Thus rank $m$ tilings with 
$b(m) = m-1$ and $b(m-1) = m$ are generated by
\begin{equation} \label{rankmcase2}
(-zq;q)_{m-1} \times zq^m \times \frac{z^{m-1} q^{3m(m-1)/2}}{(q;q)_{m-1}}
 = \frac{(-zq;q)_{m-1}}{(q;q)_{m-1}}  z^{m} q^{m(3m-1)/2}.
\end{equation}

If $b(m) = m$, position $m$ may have a white square or a black square, and thus the contribution to the generating function of position $m$ is $1+zq^m$.
For positions greater than $m$, we can generate all possibilities by first placing 
$m$ black tiles at locations $m+1$ through $2m$, and then projecting them in all possible ways.
The initial placement of those $m$ tiles has weight $z^{m} q^{m(3m+1)/2}$, and all possible projections are generated by $1/{(q;q)_{m}}$.
Thus rank $m$ tilings with 
$b(m) = m$ are generated by
\begin{equation}  \label{rankmcase1}
 (-zq;q)_{m-1} \times (1+zq^m) \times \frac{z^{m} q^{m(3m+1)/2}}{(q;q)_{m}}
 = \frac{(-zq;q)_{m}}{(q;q)_{m}}  z^{m} q^{m(3m+1)/2}.
\end{equation}
Summing (\ref{rankmcase2}) and (\ref{rankmcase1}) over all $m \geq 1$ and adding $1$ for the weight of the tiling of rank $0$
(the tiling of all white tiles),
we have our new expression for the generating function of all tilings.

\end{proof}

Theorem \ref{GenEPNT} is originally due to Sylvester \cite[p.~281]{Sylv}.
Alladi has also given a very interesting combinatorial treatment of Theorem \ref{GenEPNT} in 
\cite{Alladi}.
Setting $z=-1$ in (\ref{BestGenEPNT}) creates substantial cancellation and recovers Euler's Pentagonal Number Theorem.

\begin{cor}[Euler's Pentagonal Number Theorem] \label{EPNT}
	\begin{eqnarray*}
		(q;q)_{\infty} &=& 
		 1 + \sum_{m=1}^{\infty} (-1)^{m} q^{m(3m-1)/2} (1+q^m).
	\end{eqnarray*}
\end{cor}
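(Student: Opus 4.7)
The plan is to simply substitute $z = -1$ into equation (\ref{BestGenEPNT}) of Theorem \ref{GenEPNT} and observe the massive simplification that results. The key observation is that when $z = -1$, we have $(-zq;q)_n = (q;q)_n$ for every $n$, so both of the ratios $(-zq;q)_{m-1}/(q;q)_{m-1}$ and $(-zq;q)_m/(q;q)_m$ appearing in the summand collapse to $1$. This is exactly the ``substantial cancellation'' alluded to in the paragraph preceding the corollary.

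Concretely, first I would verify that the left-hand side of (\ref{BestGenEPNT}) becomes
$(-(-1)q;q)_\infty = (q;q)_\infty,$
which is the desired left-hand side of Euler's Pentagonal Number Theorem. Next I would substitute $z = -1$ termwise into the summand on the right; this replaces $z^m$ by $(-1)^m$ and turns each $q$-Pochhammer ratio into $1$, leaving
$$
1 + \sum_{m=1}^\infty (-1)^m \Bigl( q^{m(3m-1)/2} + q^{m(3m+1)/2} \Bigr).
$$
Finally I would factor $q^{m(3m-1)/2}$ out of each pair of terms, using the identity $m(3m+1)/2 = m(3m-1)/2 + m$, to obtain
$$
1 + \sum_{m=1}^\infty (-1)^m q^{m(3m-1)/2} (1 + q^m),
$$
which matches the stated right-hand side exactly.

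There is essentially no obstacle here beyond bookkeeping: the nontrivial content is already contained in Theorem \ref{GenEPNT}, and the corollary is purely a specialization. The only thing one might want to double-check is the convergence of the series after the substitution, but since the left-hand side $(-zq;q)_\infty$ is an entire function of $z$ for $|q|<1$, the identity in Theorem \ref{GenEPNT} is valid at $z=-1$ without any further analytic justification.
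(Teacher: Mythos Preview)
Your proposal is correct and matches the paper's own argument exactly: the paper simply states that setting $z=-1$ in (\ref{BestGenEPNT}) ``creates substantial cancellation and recovers Euler's Pentagonal Number Theorem,'' and your write-up supplies precisely those details.
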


\section{The generalized rank}

In Section \ref{tileandrank}, we saw how the rank of a tiling provided a new perspective which allowed us to prove a general $q$-series identity which had not been proven previously using tiling and projection.
In this section, we generalize the notion of the rank, which leads to a two-parameter infinite family of 
identities for $F(z,q) = (-zq;q)_{\infty}$.

The search for a tiling proof of Euler's Pentagonal Number Theorem
is what led the authors to discover the original rank statistic.
To recover Euler's Pentagonal Number Theorem,
the rank strikes the perfect balance between 
the number of projectiles 
and how far to the right we go on the $1 \times \infty$ board to
place the projectiles in their initial configuration. 
However, any ratio (or even a non-constant ratio) between these two quantities will generate an identity for $F(z,q) = (-zq;q)_{\infty}$. 
Thus we now generalize the notion of the rank.

\begin{defn}
	The \emph{$(k,l)$-rank} of a tiling is the least $m$ such that $b(km) \leq lm$.
\end{defn}

Notice that the $(1,1)$ rank of a tiling is our original rank of a tiling given in Definition \ref{def:rank}.

\begin{ex} \label{example3}
	Consider the tiling with black squares at positions
	$3,4,6,7,8,11,12,13,14,15,16$, and $18$.
	For each position $m$, we compute $b(m)$ in Table \ref{btable2} below.
	Since $b(1 \cdot 5) = b(5) =  10 \leq 2 \cdot 5 $, we see that the $(1,2)$-rank of this tiling is $5$.
\end{ex}

\begin{table}[h]
	\begin{center}
		\begin{tabular}{ l|*{20}{c}r}
			\hline
			$m$ & 1 & 2 & 3 & 4 & 5 & 6 & 7 & 8 & 9 & 10 & 11 & 12 & 13 & 14 & 15 & 16 & 17 & 18 & 19 \\  \hline
			$b(m)$ & 12 & 12 & 11 & 10 & 10 & 9 & 8 & 7 & 7 & 7 & 6 & 5 & 4 & 3 & 2 & 1 & 1 & 0 & 0 \\ \hline
		\end{tabular}
	\end{center}
	\caption{The rank of this tiling is $8$, and the $(1,2)$-rank of this tiling is $5$.}
	\label{btable2}
\end{table}

As mentioned earlier, the reader may find it helpful to think of the graph $b(m)$ as a path along lattice points.
As this lattice path goes from the vertical axis down to the horizontal axis, 
it will intersect exactly one rectangular region of lattice points of the form 
$\{(km-j,\ell m - i) \mid 0 \leq j \leq k , 0 \leq i < \ell \}$ 
(see Figure \ref{fig:M2}).
The rank of a tiling is the value of $m$ corresponding to which rectangular region its lattice path intersects.

\begin{ex}
	To determine the $(4,3)$-rank of a tiling, we can graph the rectangular regions of lattice points of the form 
	$\{(4m-j,3 m - i) \mid 0 \leq j \leq 4 , 0 \leq i < 3 \}$
	and then plot the sequence $(m,b(m))$ to see which of the rectangular regions it intersects.
	In Figure \ref{fig:M2}, we have done this for the tiling from Examples \ref{example1}, \ref{example2}, and \ref{example3}.
	Since the sequence $(m,b(m))$ intersects the third rectangular region, the $(4,3)$-rank of the tiling is 3. 
\end{ex}

\begin{rmk} \label{klrankcases}
	The fact that $b(m)$ is a non-increasing sequence which never decreases by more than one when $m$ increases by one allowed us to algebraically characterize the original rank with just two mutually exclusive cases in Remark \ref{rankcases}.
	Since the sequence $b(km)$ 
	could decrease by as much as $k$ when $m$ increases by one,
	and the condition of being $\leq lm$ is more coarse than
	the condition of being $\leq m$,
	our two mutually exclusive cases to characterize the smallest $m$ such that $b(km) \leq lm$ each become more involved as $k$ and $\ell$ increase. 
	In fact, the rank of a tiling will be
	the unique $m$ such that either
	\begin{enumerate}
	\item $b(km) = \ell m - i$, for some  $0 \leq i \leq \ell-1$, or
		\item $b(km - j)=\ell m-\ell$ and $b(km-j-1)=\ell m -\ell +1$ for some $0 \leq j \leq k-1$.
\end{enumerate}

\end{rmk}


Remark \ref{klrankcases} reflects the fact that 
as the path leaves the unique rectangular region in intersects, 
either (1), the path will pass through a lattice point where 
 $b(km) = lm - i$, for some  $0 \leq i \leq \ell-1$ (along the right edge of the rectangle),
 or (2), it will pass through a pair of lattice points such that
 $b(km-j-1)=\ell m -\ell +1$ and $b(km - j)=\ell m-\ell$ for some $0 \leq j \leq k-1$ (exiting the bottom of the rectangle).
In Figure \ref{fig:M2} below,
we have indicated this by 
graphing all of the points in each rectangular region,
coloring the points $(km,lm-i)$ in gray,
and graphing the line segments connecting 
$(km-j-1,lm-l+1)$ and $(km-j,lm-l)$ for each $m$.
For every tiling, the corresponding graph of $b(m)$ will either intersect one of the gray points,
or it will traverse one of the line segments (but not both).

\begin{figure}[h]
	
	\centering
	
	\begin{tikzpicture}
	
	\draw  [<->, thick](0,7) -- (0,0) -- (10,0);		
	\node at (11,0) {position};		
	\node at (0,7.5) {$b(m)$};		
	\node at (-0.25,-0.25) {0};		
	\node at (0,0.5) {---};		
	\node at (0,1) {---};		
	\node at (0,1.5) {---};		
	\node at (0,2) {---};		
	\node at (0,2.5) {---};		
	\node at (-0.3,2.5) {5};		
	\node at (0,3) {---};		
	\node at (0,3.5) {---};		
	\node at (0,4) {---};		
	\node at (0,4.5) {---};		
	\node at (0,5) {---};		
	\node at (-0.35,5) {10};		
	\node at (0,5.5) {---};		
	\node at (0,6) {---};		
	\node at (0,6.5) {---};		
	\node at (0.5,0) {$\mid$};		
	\node at (1,0) {$\mid$};		
	\node at (1.5,0) {$\mid$};		
	\node at (2,0) {$\mid$};		
	\node at (2.5,0) {$\mid$};		
	\node at (2.5,-0.3) {5};		
	\node at (3,0) {$\mid$};		
	\node at (3.5,0) {$\mid$};		
	\node at (4,0) {$\mid$};		
	\node at (4.5,0) {$\mid$};		
	\node at (5,0) {$\mid$};		
	\node at (5,-0.3) {10};		
	\node at (5.5,0) {$\mid$};		
	\node at (6,0) {$\mid$};		
	\node at (6.5,0) {$\mid$};		
	\node at (7,0) {$\mid$};		
	\node at (7.5,0) {$\mid$};		
	\node at (7.5,-0.3) {15};		
	\node at (8,0) {$\mid$};		
	\node at (8.5,0) {$\mid$};		
	\node at (9,0) {$\mid$};		
	\node at (9.5,0) {$\mid$};

	\draw [fill] (-0.07,5.93) rectangle (0.07,6.07);
	\draw [fill] (0.43,5.93) rectangle (0.57,6.07);
	\draw [fill] (0.93,5.93) rectangle (1.07,6.07);
	\draw [fill] (1.43,5.43) rectangle (1.57,5.57);
	\draw [fill] (1.93,4.93) rectangle (2.07,5.07);
	\draw [fill] (2.43,4.93) rectangle (2.57,5.07);
	\draw [fill] (2.93,4.43) rectangle (3.07,4.57);
	\draw [fill] (3.43,3.93) rectangle (3.57,4.07);
	\draw [fill] (3.93,3.43) rectangle (4.07,3.57);
	\draw [fill] (4.43,3.43) rectangle (4.57,3.57);	
	\draw [fill] (4.93,3.43) rectangle (5.07,3.57);
	\draw [fill] (5.43,2.93) rectangle (5.57,3.07);
	\draw [fill] (5.93,2.43) rectangle (6.07,2.57);
	\draw [fill] (6.43,1.93) rectangle (6.57,2.07);
	\draw [fill] (6.93,1.43) rectangle (7.07,1.57);
	\draw [fill] (7.43,0.93) rectangle (7.57,1.07);
	\draw [fill] (7.93,0.43) rectangle (8.07,0.57);
	\draw [fill] (8.43,0.43) rectangle (8.57,0.57);
	\draw [fill] (8.93,-0.07) rectangle (9.07,0.07);
	\draw [fill] (9.43,-0.07) rectangle (9.57,0.07);

	\filldraw [thick, gray] (0,0) circle [radius=0.07];
	\draw (0,0.5) circle [radius=0.07];
	\draw [thick] (0,0.5) -- (0.5,0);
	\draw (0,1) circle [radius=0.07];
	\draw [thick] (0.5,0.5) -- (1,0);
	\draw (0,1.5) circle [radius=0.07];
	\draw [thick] (1,0.5) -- (1.5,0);
	\draw (0.5,0.5) circle [radius=0.07];
	\draw [thick] (1.5,0.5) -- (2,0);
	\draw (0.5,1) circle [radius=0.07];
	\draw [thick] (2,2) -- (2.5,1.5);
	\draw (0.5,1.5) circle [radius=0.07];
	\draw [thick] (2.5,2) -- (3,1.5);
	\draw (1,0.5) circle [radius=0.07];
	\draw [thick] (3,2) -- (3.5,1.5);
	\draw (1,1) circle [radius=0.07];
	\draw [thick] (3.5,2) -- (4,1.5);
	\draw (1,1.5) circle [radius=0.07];
	\draw [thick] (4,3.5) -- (4.5,3);
	\draw (1.5,0.5) circle [radius=0.07];
	\draw [thick] (4.5,3.5) -- (5,3);
	\draw (1.5,1) circle [radius=0.07];
	\draw [thick] (5,3.5) -- (5.5,3);
	\draw (1.5,1.5) circle [radius=0.07];
	\draw [thick] (5.5,3.5) -- (6,3);
	
	\filldraw [thick, gray] (2,0.5) circle [radius=0.07];
	\filldraw [thick, gray] (2,1) circle [radius=0.07];
	\filldraw [thick, gray] (2,1.5) circle [radius=0.07];
	\draw (2,2) circle [radius=0.07];
	\draw (2,2.5) circle [radius=0.07];
	\draw (2,3) circle [radius=0.07];
	\draw (2.5,2) circle [radius=0.07];		
	\draw (2.5,2.5) circle [radius=0.07];		
	\draw (2.5,3) circle [radius=0.07];		
	\draw (3,2) circle [radius=0.07];		
	\draw (3,2.5) circle [radius=0.07];		
	\draw (3,3) circle [radius=0.07];
	\draw (3.5,2) circle [radius=0.07];		
	\draw (3.5,2.5) circle [radius=0.07];		
	\draw (3.5,3) circle [radius=0.07];		
	\filldraw [thick, gray] (4,2) circle [radius=0.07];		
	\filldraw [thick, gray] (4,2.5) circle [radius=0.07];		
	\filldraw [thick, gray] (4,3) circle [radius=0.07];		
	\draw (4,3.5) circle [radius=0.07];		
	\draw (4,4) circle [radius=0.07];		
	\draw (4,4.5) circle [radius=0.07];		
	\draw (4.5,3.5) circle [radius=0.07];		
	\draw (4.5,4) circle [radius=0.07];		
	\draw (4.5,4.5) circle [radius=0.07];		
	\draw (5,3.5) circle [radius=0.07];		
	\draw (5,4) circle [radius=0.07];		
	\draw (5,4.5) circle [radius=0.07];		
	\draw (5.5,3.5) circle [radius=0.07];		
	\draw (5.5,4) circle [radius=0.07];		
	\draw (5.5,4.5) circle [radius=0.07];		
	\filldraw [thick, gray] (6,3.5) circle [radius=0.07];		
	\filldraw [thick, gray] (6,4) circle [radius=0.07];		
	\filldraw [thick, gray] (6,4.5) circle [radius=0.07];
	\draw [thick] (6,5) -- (6.5,4.5);
	\draw [thick] (6.5,5) -- (7,4.5);
	\draw [thick] (7,5) -- (7.5,4.5);
	\draw [thick] (7.5,5) -- (8,4.5);
	\draw (6,5) circle [radius=0.07];
	\draw (6,5.5) circle [radius=0.07];
	\draw (6,6) circle [radius=0.07];
	\draw (6.5,5) circle [radius=0.07];
	\draw (6.5,5.5) circle [radius=0.07];
	\draw (6.5,6) circle [radius=0.07];
	\draw (7,5) circle [radius=0.07];
	\draw (7,5.5) circle [radius=0.07];
	\draw (7,6) circle [radius=0.07];
	\draw (7.5,5) circle [radius=0.07];
	\draw (7.5,5.5) circle [radius=0.07];
	\draw (7.5,6) circle [radius=0.07];
	\filldraw [thick, gray] (8,5) circle [radius=0.07];
	\filldraw [thick, gray] (8,5.5) circle [radius=0.07];
	\filldraw [thick, gray] (8,6) circle [radius=0.07];
	
	\end{tikzpicture}
	
	\caption{The circular dots are the points in the rectangular regions of lattice points of the form 
		$\{(4m-j,3 m - i) \mid 0 \leq j \leq 4 , 0 \leq i < 3 \}$.  The gray dots are the points $(4m,3 m -i)$ for $0 \leq i \leq 2$, and the line segments connect pairs of points $(4m-j-1,3 m -2)$ and $(4m-j,3 m-3)$ for $0 \leq j \leq 3$. The black squares graph the sequence $b(m)$ for the tiling from Examples \ref{example1}, \ref{example2}, and \ref{example3}.  It is easy to see that the sequence $(m,b(m))$ passes through the third rectangular region, and thus the $(4,3)$-rank is $3$.  Algebraically, this is witnessed by the fact that $b(10) = b(4 \cdot 3 - 1 - 1) =  3 \cdot 3 -2 = 7$ and $b(11) = b(4 \cdot 3 -1) =  3 \cdot 3 -3 = 6$.} \label{fig:M2}
	
\end{figure}
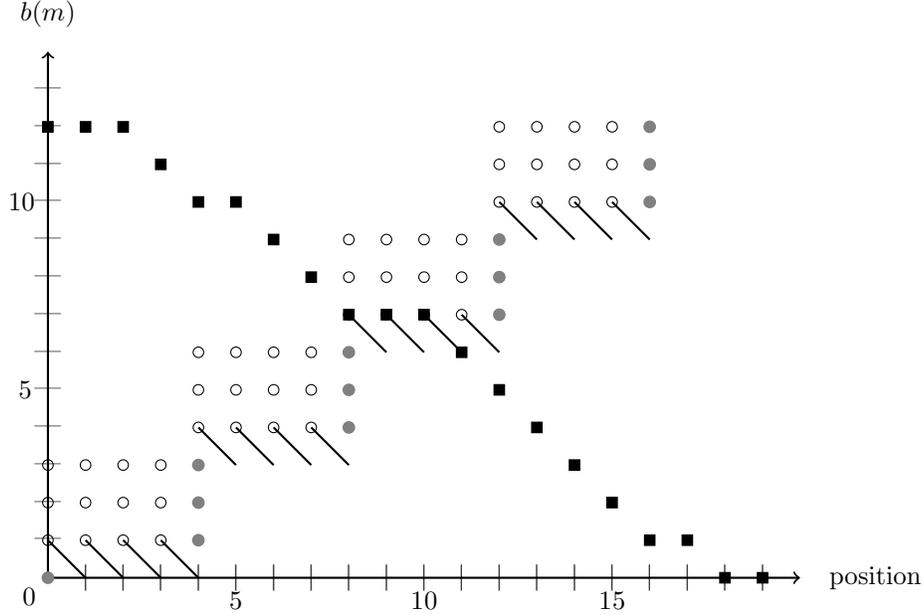


We are now ready to give an infinite family of identities for $F(z,q) = (-zq;q)_{\infty}$.

\begin{thm} \label{klthm}
	Let $|q| < 1$. For any positive integers $k$ and $\ell$, we have 
	\begin{align*}
	(-zq;q)_{\infty}=& 1 + \sum\limits_{m=1}^{\infty} \sum\limits_{i=0}^{\ell-1} 
	\frac{(-zq;q)_{km}}{(q;q)_{\ell m-i}}z^{\ell m-i}q^{(\ell m-i)((2k+\ell)m-i+1)/2} \\
	&+\sum\limits_{m=1}^{\infty} \sum\limits_{j=0}^{k-1}  \frac{(-zq;q)_{km-j-1}}{(q;q)_{\ell m-\ell}}z^{\ell m-\ell+1}q^{(\ell m - \ell +1)((2k+\ell)m-2j - \ell)/2}.
	\end{align*}
\end{thm}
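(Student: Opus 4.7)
The plan is to mimic the proof of Theorem \ref{GenEPNT}, summing over all tilings indexed by their $(k,\ell)$-rank $m$ and splitting according to the two mutually exclusive cases in Remark \ref{klrankcases}. The rank-$0$ tiling (all white) contributes the initial $1$. For $m \geq 1$, I will build the generating function of rank-$m$ tilings by (i) freely filling the positions strictly to the left of the rightmost "decision position," (ii) forcing a specific color at the decision position when the case demands it, and (iii) placing the required number of black tiles in the leftmost available spots to the right and projecting them arbitrarily.

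For Case (1) with parameters $m$ and $i$ (so $b(km) = \ell m - i$, $0 \leq i \leq \ell-1$), positions $1$ through $km$ are entirely unconstrained, giving the factor $(-zq;q)_{km}$. I then need exactly $\ell m - i$ black tiles at positions greater than $km$; initially place them at positions $km+1, km+2, \dots, km + (\ell m - i)$, for an initial weight of
\[
z^{\ell m - i} q^{(\ell m - i)(km) + (\ell m - i)(\ell m - i + 1)/2} \;=\; z^{\ell m - i} q^{(\ell m - i)((2k+\ell)m - i + 1)/2},
\]
and let all simultaneous projections of these tiles contribute $1/(q;q)_{\ell m - i}$. Summing on $i$ reproduces the first double sum.

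For Case (2) with parameters $m$ and $j$ (so $b(km-j-1) = \ell m - \ell + 1$ and $b(km-j) = \ell m - \ell$, $0 \leq j \leq k-1$), positions $1$ through $km-j-1$ are free, yielding $(-zq;q)_{km-j-1}$; the drop from $b(km-j-1)$ to $b(km-j)$ forces position $km-j$ to be black, contributing $zq^{km-j}$. Then place the $\ell m - \ell$ required black tiles initially at positions $km-j+1, \dots, km-j+(\ell m - \ell)$ (initial weight $z^{\ell m - \ell} q^{(\ell m - \ell)(km-j) + (\ell m - \ell)(\ell m - \ell + 1)/2}$), and let projection contribute $1/(q;q)_{\ell m - \ell}$. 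Combining the three factors and collecting the exponent of $q$ as
\[
(km-j)(1 + \ell m - \ell) + \frac{(\ell m - \ell)(\ell m - \ell + 1)}{2} = \frac{(\ell m - \ell + 1)((2k+\ell)m - 2j - \ell)}{2}
\]
recovers the second double sum.

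The one nontrivial verification I expect to be the main obstacle is confirming that every tiling produced by these two constructions actually has $(k,\ell)$-rank equal to $m$, not smaller. For Case (1) this follows because $b(km') \geq b(km) = \ell m - i \geq \ell(m-1) + 1 > \ell m'$ for each $m' < m$, using monotonicity of $b$ and $i \leq \ell - 1$. For Case (2) the same monotonicity, together with $j \leq k-1$ (so that $km - k \leq km - j - 1$), yields $b(k(m-1)) \geq b(km - j - 1) = \ell(m-1) + 1 > \ell(m-1)$, ruling out any smaller rank. Once these checks are in place, summing both cases over $m \geq 1$ and adding the rank-$0$ contribution gives the asserted identity, since Remark \ref{klrankcases} guarantees that the two cases partition the set of all tilings with positive rank.
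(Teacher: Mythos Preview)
Your proposal is correct and follows essentially the same approach as the paper's own proof: partition tilings by $(k,\ell)$-rank, split each rank-$m$ class into the two cases of Remark~\ref{klrankcases}, generate the free positions by $(-zq;q)_{\cdot}$, force the black tile at position $km-j$ in Case~(2), and produce the tiles beyond by initial placement plus projection. The one addition you make---explicitly checking via monotonicity of $b$ that the constructed tilings cannot have $(k,\ell)$-rank smaller than $m$---is left implicit in the paper (it is folded into the assertion that Remark~\ref{klrankcases} gives a \emph{unique} $m$), so your write-up is, if anything, slightly more careful on that point.
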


\begin{proof}
We proceed by 
writing down the generating function for all tilings of $(k,\ell)$-rank $m$,
and then summing over every possible $(k,\ell)$-rank of a tiling.

For tilings of $(k,\ell)$-rank $m \geq 1$,
if we are in Case (1) from Remark \ref{klrankcases}, our only condition on the coloring of our tiles is that 
	$b(km) = \ell m - i$, for some  $0 \leq i \leq \ell-1$.
Notice that in this case the weight of the tile at each position $i$ 
for $1 \leq i \leq km$ 
can be either $1$ or $zq^i$, and thus
the first $km$ positions are generated by 
$(-zq;q)_{km}$. 
For positions greater than $m$, we can generate all possibilities by first placing 
$\ell m - i$ black tiles at locations $km+1$ through $km+ \ell m - i$, and then projecting them in all possible ways.
The initial placement of those $\ell m - i$ tiles has weight $z^{\ell m-i}q^{(\ell m-i)((2k+\ell)m-i+1)/2}$, and all possible projections are generated by $1/{(q;q)_{\ell m - i}}$.
Thus $(k,\ell)$-rank $m$ tilings with 
$b(km) = \ell m -i$ are generated by
\begin{equation}  \label{klrankmcase1}
\frac{(-zq;q)_{km}}{(q;q)_{\ell m - i}}  z^{\ell m-i}q^{(\ell m-i)((2k+\ell)m-i+1)/2}.
\end{equation}
If we are in Case (2) from Remark \ref{klrankcases}, our only condition on the coloring of our tiles is that 
 $b(km - j)=\ell m-\ell$ and $b(km-j-1)=\ell m -\ell +1$ for some $0 \leq j \leq k-1$.
Notice that in this case the weight of the tile at each position $i$ 
for $1 \leq i \leq km-j-1$ 
must be either $1$ or $zq^i$, and thus
the first $km-j-1$ positions are generated by 
$(-zq;q)_{km-j-1}$. 
Notice also that
position $km-j$ must have a black square,
and thus the contribution to the generating function of position $km-j$ is $zq^{km-j}$.
For positions greater than $km-j$, we can generate all possibilities by first placing 
$\ell m-\ell$ black tiles at locations $km-j+1$ through $km-j+\ell m - \ell$, and then projecting them in all possible ways.
The initial placement of those $\ell m-\ell$ tiles has weight 
$z^{\ell m-\ell} q^{(\ell m - \ell)(2km-2j+\ell m - \ell + 1)/2}$, and all possible projections are generated by $1/{(q;q)_{\ell m-\ell}}$.
Thus $(k,\ell)$-rank $m$ tilings with 
$b(km - j)=\ell m-\ell$ and $b(km-j-1)=\ell m -\ell +1$ are generated by
\begin{eqnarray} \label{klrankmcase2}
(-zq;q)_{km-j-1} &\times& zq^{km-j} \times \frac{z^{\ell m-\ell} q^{(\ell m - \ell)(2km-2j+\ell m - \ell + 1)/2}}{(q;q)_{\ell m-\ell}} \\
\notag &=& \frac{(-zq;q)_{km-j-1}}{(q;q)_{\ell m-\ell}}  z^{\ell m-\ell+1} q^{(\ell m - \ell +1)(2km-2j+\ell m - \ell + 1)/2}.
\end{eqnarray}
Summing (\ref{klrankmcase1}) and (\ref{klrankmcase2}) over all $m \geq 1$ and adding $1$ for the weight of the tiling of rank $0$
(the tiling of all white tiles),
we have our new expression for the generating function of all tilings.

\end{proof}

Theorem \ref{klthm} has many interesting corollaries.
Several of the most interesting corollaries come from setting $z=-1$
because of the cancellation that then occurs between the numerators and denominators of the summands.
Of course, since the $(1,1)$ rank of a tiling is just our original rank 
setting $k=\ell=1$ and $z=-1$ recovers
Euler's Pentagonal Number Theorem.

The next simplest corollary of Theorem \ref{klthm} with $z=-1$ comes from setting $k=2$ and $\ell=1$.

\begin{cor} \label{cor:hept}
	
	We have
	
	\begin{eqnarray*}
	(q;q)_{\infty} &=&	
	1+\sum_{m=1}^{\infty}(-1)^{m} \left \{  (q^{m+1};q)_{m} \ q^{m(5m+1)/2} +
	 (q^m;q)_{m} \ q^{m(5m-1)/2}
	+  (q^m;q)_{m-1} \ q^{m(5m-3)/2} \right \}	\\
	&=&	
	1+\sum_{m=1}^{\infty} (-1)^m \left \{  (q^{m+1};q)_{m-2} \ 
	(1-q^{3m-1}-q^{4m}+q^{6m-1}) \ q^{m(5m-3)/2} 
	 \right \}.
	\end{eqnarray*}
	
\end{cor}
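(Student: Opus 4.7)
The plan is to apply Theorem \ref{klthm} with the specialization $k = 2$, $\ell = 1$, and $z = -1$, so that $-zq = q$ and the left side becomes $(q;q)_{\infty}$. I will then simplify the resulting sum to obtain the first displayed identity, and perform a short algebraic manipulation to collapse it into the second.

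First I would unpack the first sum in Theorem \ref{klthm}: since $\ell = 1$, the inner index $i$ only takes the value $0$, yielding the single term $(-1)^m \, (q;q)_{2m}/(q;q)_m \cdot q^{m(5m+1)/2}$ for each $m$. For the second sum, with $k = 2$ the inner index $j$ takes values $0$ and $1$; the exponent $(\ell m-\ell+1)((2k+\ell)m - 2j - \ell)/2$ collapses to $m(5m - 2j - 1)/2$, producing $(-1)^m (q;q)_{2m-1}/(q;q)_{m-1} \cdot q^{m(5m-1)/2}$ and $(-1)^m (q;q)_{2m-2}/(q;q)_{m-1} \cdot q^{m(5m-3)/2}$. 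Using the elementary rewrites $(q^{m+1};q)_m = (q;q)_{2m}/(q;q)_m$, $(q^m;q)_m = (q;q)_{2m-1}/(q;q)_{m-1}$, and $(q^m;q)_{m-1} = (q;q)_{2m-2}/(q;q)_{m-1}$, this is exactly the first displayed identity of the corollary.

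For the second form, I would factor out $(q^{m+1};q)_{m-2}\, q^{m(5m-3)/2}$ from the three terms in the braces. Writing
$$(q^{m+1};q)_m = (q^{m+1};q)_{m-2}(1-q^{2m-1})(1-q^{2m}),$$
$$(q^m;q)_m = (1-q^m)(q^{m+1};q)_{m-2}(1-q^{2m-1}),$$
$$(q^m;q)_{m-1} = (1-q^m)(q^{m+1};q)_{m-2},$$
the bracketed expression reduces to
$$(1-q^{2m-1})(1-q^{2m})q^{2m} + (1-q^m)(1-q^{2m-1})q^m + (1-q^m),$$
which I would expand and verify simplifies to $1 - q^{3m-1} - q^{4m} + q^{6m-1}$ by tracking the cancellations among the resulting monomials (the $\pm q^m$, $\pm q^{2m}$, and $\pm q^{4m-1}$ pairs all cancel).

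The main obstacle is purely bookkeeping: matching the specialization of the exponent $(\ell m - \ell + 1)((2k+\ell)m - 2j - \ell)/2$ to the claimed $m(5m - 2j - 1)/2$, and then tracking the twelve monomials produced by expanding the three products to confirm the four-term simplification. There is no conceptual difficulty beyond meticulous calculation, since Theorem \ref{klthm} has already done the combinatorial heavy lifting.
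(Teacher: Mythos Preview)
Your proposal is correct and follows exactly the approach the paper intends: the corollary is simply the specialization $k=2$, $\ell=1$, $z=-1$ of Theorem~\ref{klthm}, and the paper offers no further details. Your write-up actually supplies the bookkeeping the paper omits, including the derivation of the second displayed form; the only minor point worth flagging is that for $m=1$ the factor $(q^{m+1};q)_{m-2}=(q^{2};q)_{-1}$ must be read via the standard extension $(a;q)_{-1}=1/(1-aq^{-1})$, under which your factorizations remain valid.
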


\begin{rmk}
	Notice that the exponents $m(5m-3)/2$ are the heptagonal numbers,
	and thus Corollary \ref{cor:hept} is a ``heptagonal number theorem" of sorts,
	where we have expressed $(q;q)_{\infty}$ as a sum of heptagonal powers of $q$ multiplied by simple polynomials.
	Notice also that if we apply Euler's Pentagonal Number Theorem to the left-hand side of Corollary \ref{cor:hept}, we see that this is a way to construct the \emph{pentagonal} number series from simple polynomials multiplied \emph{heptagonal} powers of $q$. 
	If we set $z=-1$, $k=3$ and $\ell=1$ in Theorem \ref{klthm},
	we get a similar ``nonagonal number theorem,"
	and more generally, any instance of setting $z=-1$ and $\ell=1$ in Theorem \ref{klthm},
	leads to a ``$(2k+3)$-agonal number theorem."
\end{rmk}

Setting $z=1$ in Theorem \ref{klthm} also gives some interesting corollaries, although they are of a very different flavor than Corollaries \ref{EPNT} and \ref{cor:hept}.
Notice that with $z=1$, if we expand the denominators in the sum on the right-hand side of Theorem \ref{klthm} as geometric series, everything on both sides of the equation becomes positive.
In contrast to corollaries where $z=-1$ and cancellation plays a huge role,
when we let $z=1$, there is no cancellation whatsoever, and our results are purely additive.
The astute reader will also notice that in the case where $z=1$, while the left-hand side of Theorem \ref{klthm} is the generating function for partitions into distinct parts, the summands on the right-hand side are related to a variation of overpartitions.
We leave the exploration of this connection between partitions into distinct parts and a variation of overpartitions to the reader.


\begin{rmk}
In Remark \ref{klrankcases}, we characterized the $(k,l)$-rank of a tiling by observing that the sequence of points $(m,b(m))$ must pass through exactly one of the specified rectangular regions of lattice points.
We then constructed Cases (1) and (2) by observing that 
passing through a rectangular region occurs if and only if the sequence of points 
crosses the right edge or exits the bottom edge the rectangle.
However, there are many different sets of necessary and sufficient conditions for passing through a rectangular region.
For example, we could instead observe that the sequence of points $(m,b(m))$ must either cross the left edge (intersecting a point) or enter along the top edge (traversing a line segment) of the rectangle.
Alternately, we could construct many different configurations of $k$ line segments and $l$ points such that 
intersecting a rectangular region is equivalent to the sequence of points $(m,b(m))$ either passing through one of the $k$ points or traversing one of the $l$ line segments of the configuration.
Using a configuration of $k$ line segments and $l$ points other than the one we have used (along the bottom and right edges of the rectangle) would generate what appears to be a slightly different identity for $(-zq;q)$ than the one in Theorem \ref{klthm}.
However, it turns out that 
the difference is trivial enough that we do not consider these alternate identities to be true generalizations.
In particular,
the right-hand side of the equation in Theorem \ref{klthm} can be transformed into 
the right-hand side that would come from using any alternate configuration
by repeated applications of the simple algebraic identity
$$
(1+zq^x)q^y + (1-q^y)
=
1 + zq^{x+y},
$$
with varying values of $x$ and $y$
within each summand.
\end{rmk}

Setting $k=1$ in Theorem \ref{klthm}, we have
\begin{cor} \label{1lcor}
	Let $|q| < 1$. For any positive integer $\ell$, we have 
	\begin{align*}
	(-zq;q)_{\infty}=& 1 + \sum\limits_{m=1}^{\infty} \sum\limits_{i=0}^{\ell-1} 
	\frac{(-zq;q)_{m}}{(q;q)_{\ell m-i}}z^{\ell m-i}q^{(\ell m-i)((2+\ell)m-i+1)/2} \\
	&+\sum\limits_{m=1}^{\infty}  \frac{(-zq;q)_{m-1}}{(q;q)_{\ell m-\ell}}z^{\ell m-\ell+1}q^{(\ell m - \ell +1)((2+\ell)m - \ell)/2}.
	\end{align*}
\end{cor}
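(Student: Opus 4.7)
The plan is to deduce Corollary \ref{1lcor} as an immediate specialization of Theorem \ref{klthm} at $k=1$. Since Theorem \ref{klthm} has already been proved in full generality, no further combinatorial argument is needed; the whole task reduces to substituting $k=1$ into the right-hand side and verifying that the resulting expression coincides term for term with the statement of the corollary.

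For the Case (1) sum in Theorem \ref{klthm}, setting $k=1$ turns the factor $(-zq;q)_{km}$ into $(-zq;q)_{m}$, leaves $(q;q)_{\ell m - i}$ untouched, and sends the $q$-exponent $(\ell m - i)((2k+\ell)m - i + 1)/2$ to $(\ell m - i)((2+\ell)m - i + 1)/2$. Term by term this reproduces the first double sum of Corollary \ref{1lcor}.

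For the Case (2) sum, the inner index range $0 \leq j \leq k-1$ collapses to the single value $j=0$, so the sum over $j$ disappears entirely. With $k=1$ and $j=0$, the numerator $(-zq;q)_{km-j-1}$ becomes $(-zq;q)_{m-1}$, the factor $z^{\ell m - \ell + 1}$ is unchanged, and the exponent $(\ell m - \ell + 1)((2k+\ell)m - 2j - \ell)/2$ simplifies to $(\ell m - \ell + 1)((2+\ell)m - \ell)/2$, matching the final sum of the corollary. I expect no genuine obstacle: the entire proof is a one-line specialization plus the routine simplification of two $q$-exponents. The only thing worth double-checking is that the arithmetic in these exponent simplifications is carried out correctly, since miscounting a term of size $1$ is the most plausible source of error.
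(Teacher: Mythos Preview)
Your proposal is correct and is exactly the approach the paper takes: the corollary is obtained simply by setting $k=1$ in Theorem~\ref{klthm}, so that the $j$-sum collapses to $j=0$ and the remaining substitutions reproduce the stated identity verbatim. No further argument is needed.
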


\section{Conclusion}

By introducing the rank, we were able to extend the scope of the method of tiling
to include Euler's Pentagonal Number Theorem and a three-parameter generalization. 
This generalization has many interesting corollaries, including a result of Sylvester, a ``$(2k+3)$-agonal number theorem," and identities relating partitions into distinct part to variations of overpartitions.
In fact, there are several more possible generalizations that we have not listed above. 
In \cite{LS1} and \cite{LS2}, Little and Sellers treat tilings with dominoes instead of squares quite thoroughly.
Introducing the concept of the rank into domino tilings reveals many more identities, all involving the $q$-Fibonacci polynomials of Carlitz \cite{Carlitz}.
These identities are of a somewhat different and more complicated form than most classical identities. 
Even without dominoes, there are other generalizations that are even more complicated. For example, for any two increasing sequences of nonnegative integers $X = \{x_m\}$ and $Y = \{y_m\}$, we can define the $(X,Y)$-rank to be least $m$ such that $b(x_m) \leq y_m$.
Writing down the generating function for all tilings of $(X,Y)$-rank $m$ and then summing over every possible $(X,Y)$-rank of a tiling gives yet another new identity for $(-zq;q)_{\infty}$ in each case, although those identities can be essentially arbitrarily complicated.
It is interesting that these generalizations exists, although it is unclear to the authors if any individual one of these generalizations is of any interest. 

An open problem that remains is whether or not Jacobi's Triple Product Identity can be proven using the method of tiling through the introduction of a rank-like statistic. 
In addition, it will be interesting to see if the scope of the method of tiling can be extended even further to include other families of identities by using the concept of the rank. 

Finally, the authors would like to thank George Andrews for feedback on Theorem \ref{klthm} and Nathan Kaplan for comments on an earlier version of this paper.


\end{document}